\documentclass[a4paper]{amsart}

\usepackage[english]{babel}
\usepackage[utf8]{inputenc}
\usepackage[T1]{fontenc}
\usepackage{amsmath}
\usepackage{amssymb}
\usepackage{multirow}
\usepackage{alltt}
\usepackage{graphicx}
\usepackage{bbm}
\usepackage{amsthm}
\usepackage{mathrsfs}
\usepackage{textcomp}
\usepackage{calc}
\usepackage{fullpage}
\usepackage[abbrev]{amsrefs}
\usepackage{esint}

\usepackage[usenames,dvipsnames]{color}
\definecolor{darkblue}{RGB}{0,0,170}
\definecolor{brickred}{RGB}{200,0,0}
\usepackage[pdfauthor={Nicola Abatangelo}, hyperindex, linktocpage]{hyperref}
\hypersetup{colorlinks=true, linkcolor=darkblue, citecolor=brickred}

\newcommand{\R}{\mathbb{R}}
\newcommand{\N}{\mathbb{N}}

\newcommand{\dist}{\mathrm{dist}}

\newcommand{\diam}{\mathrm{diam}}

\newcommand{\cH}{{\mathcal H}}

\newtheorem{theorem}{Theorem}[section]
\newtheorem{lemma}[theorem]{Lemma}

\theoremstyle{remark}
\newtheorem{remark}[theorem]{Remark}

\theoremstyle{definition}

\allowdisplaybreaks

\title{Rigidity of balls in the solid mean value property
for polyharmonic functions}
\date{\today}
\author{Nicola Abatangelo}
\address{Dipartimento di Matematica, Alma Mater Studiorum Università di Bologna, P.zza di Porta S. Donato 5, 40126 Bologna, Italy.}
\email{nicola.abatangelo@unibo.it}

\thanks{\textit{MSC2020}: 31B30, 35B05, 35J30 (primary); 35G05, 35E20 (secondary).}

\thanks{\textit{Keywords}: bilaplacian, polylaplacian, higher-order operators, mean value theorem, Kelvin transform.}

\begin{document}

\begin{abstract}
We show that balls are the only open bounded domains for which the mean value formula 
for polyharmonic functions holds. 
We do so by adapting an argument of \"U. Kuran for harmonic functions.
Also, we provide a quantitative version of the same result.
\end{abstract}

\maketitle

\section{Introduction}

The mean value property is a tool of the utmost importance in the theory of harmonic functions,
as it yields maximum principles, the Liouville theorem, the Harnack inequality, 
and it ties the theory of PDEs with the potential theory of stochastic processes.

Here we are interested in the analogue property enjoyed by polyharmonic functions, \textit{i.e.},
functions solving
\[
\Delta^m u=0
\qquad\text{in some~$\Omega\subseteq\R^n$ open and bounded, where~$m\in\N$ with~$m\geq 2$.}
\]
Our main goal is to show that the corresponding \textit{solid} mean value property can only hold on balls,
as it has been shown for harmonic functions in~\cites{MR0140700,MR0177124,MR0320348}.

Mean value formulas for polyharmonic functions can be traced back to~\cite{03027684} and~\cite{03024989}. 
Later, similar ones were re-discovered in~\cite{MR0192074}. 
Below, we present the formulations of ~\cite{MR0192074}*{Table B} and~\cite{MR3176589}*{Theorem 3.1}.

\begin{lemma}\label{lem:mv}
Let~$D\subseteq\R^n$ be open and~$m\in\N$ with~$m\geq 2$. A function~$u\in L^1(D)\cap C^0(D)$ 
is~$m$-polyharmonic in~$D$ if and only if for any~$x_0\in D$,~$r>0$ such that~$B_r(x_0)\subseteq D$,
and~$0<\alpha_1<\cdots<\alpha_m\leq 1$ it holds 
\begin{align}\label{eq:coeff}
u(x_0) = \sum_{k=1}^m (-1)^{k+1} c_k\fint_{B_{\alpha_kr}(x_0)}u 
= \fint_{B_r}\Bigg(\sum_{k=1}^m (-1)^{k+1} c_k u(x_0+\alpha_k y) \Bigg)dy
\end{align}
where~$c_1,\ldots,c_m>0$ are constants depending on~$m$ and~$\alpha_1,\ldots,\alpha_m$
satisfying
\begin{align*}
\sum_{k=1}^m (-1)^{k+1} c_k = 1.
\end{align*}
\end{lemma}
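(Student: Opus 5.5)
The plan is to reduce the solid formula to the classical Pizzetti expansion of spherical means and then choose the constants $c_k$ so that all correction terms cancel.

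\emph{Step 1: expansion of solid means.} For $u$ polyharmonic in $D$ (hence smooth, by Weyl-type regularity for $\Delta^m$), Pizzetti's formula for spherical means is exact:
\[
\fint_{\partial B_\rho(x_0)}u=\sum_{j=0}^{m-1}a_j\rho^{2j}\Delta^ju(x_0),
\qquad
a_j=\frac{\Gamma(n/2)}{4^j j!\,\Gamma(n/2+j)}.
\]
I would prove it by induction on $m$, using $\frac{d}{d\rho}\fint_{\partial B_\rho}u=\frac{\rho}{n}\fint_{B_\rho}\Delta u$ and applying the inductive hypothesis to $\Delta u$, which is $(m-1)$-polyharmonic. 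Integrating in $\rho$ gives
\[
\fint_{B_\rho(x_0)}u=\sum_{j=0}^{m-1}b_j\rho^{2j}\Delta^ju(x_0),
\qquad
b_j=\frac{n}{n+2j}\,a_j>0 .
\]

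\emph{Step 2: choice of the constants.} Substituting $\rho=\alpha_kr$, the right-hand side of \eqref{eq:coeff} equals
\[
\sum_{j=0}^{m-1}b_jr^{2j}\Delta^ju(x_0)\sum_k(-1)^{k+1}c_k\alpha_k^{2j}.
\]
It therefore suffices to impose the linear system
\[
\sum_k(-1)^{k+1}c_k\alpha_k^{2j}=\delta_{j0},\qquad j=0,\dots,m-1.
\]
This is a Vandermonde system in the distinct nodes $\alpha_k^2$, so it is uniquely solvable. By Cramer's rule or Lagrange interpolation, $(-1)^{k+1}c_k=\prod_{i\neq k}\frac{\alpha_i^2}{\alpha_i^2-\alpha_k^2}$. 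Since $\alpha_i^2-\alpha_k^2<0$ exactly for the $k-1$ indices $i<k$, the sign of this product is $(-1)^{k-1}$, and hence $c_k>0$. The $j=0$ equation is precisely $\sum_k(-1)^{k+1}c_k=1$. The second equality in \eqref{eq:coeff} is just the change of variables $y\mapsto x_0+\alpha_k y$.

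\emph{Step 3: the converse.} Assume $u\in L^1\cap C^0$ satisfies \eqref{eq:coeff} for all small $r$. The first step is regularity. The formula states that $u$ equals the convolution $u*\phi_r$, where $\phi_r=\sum_k(-1)^{k+1}c_k|B_{\alpha_kr}|^{-1}\chi_{B_{\alpha_kr}}$ is radial with unit mass. Averaging this identity over $r$ against a smooth bump in $r$ writes $u=u*\psi$ with $\psi$ smooth and compactly supported, locally in $D$, so $u\in C^\infty$.

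Next, Taylor-expand the smooth $u$ to order $2m$. Its solid means have the expansion
\[
\fint_{B_\rho(x_0)}u=\sum_{j\ge0}b_j\rho^{2j}\Delta^ju(x_0),
\]
now as an asymptotic rather than exact series. Plugging this into the formula, the terms with $j<m$ cancel by the choice of the $c_k$, leaving
\[
0=b_mr^{2m}\Delta^mu(x_0)\sum_k(-1)^{k+1}c_k\alpha_k^{2m}+o(r^{2m}).
\]
The coefficient $\sum_k(-1)^{k+1}c_k\alpha_k^{2m}$ is nonzero: by Lagrange interpolation it equals $(-1)^{m+1}\prod_k\alpha_k^2$, which follows from the remainder of $t^m$ after interpolation at the nodes $\alpha_k^2$. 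Hence $\Delta^mu(x_0)=0$ for every $x_0\in D$.

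I expect the regularity step in the converse to be the main obstacle. The interval averaging must be done carefully, since $r$ is restricted by $\dist(x_0,\partial D)$, so the argument has to be carried out on compact subsets of $D$. The same averaging trick is what lets the classical mean-value characterization of harmonic functions bootstrap continuity to smoothness.
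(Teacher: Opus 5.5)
Your proof is correct, and it takes a different route from the paper. The paper does not actually prove the equivalence: it quotes the mean value formula from \cite{MR0192074} and \cite{MR3176589}. It then justifies only the properties of the coefficients: the identification $c_k=v_{k,1}/\det V$, positivity via total positivity of the Vandermonde matrix $V$ \cite{MR2791531}, and $\sum_k(-1)^{k+1}c_k=1$ via Laplace expansion of $\det V$ along its first column of ones.

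Your argument is self-contained. Pizzetti's formula gives the forward direction. In the converse, averaging in $r$ gives smoothness, and the identity $\sum_k(-1)^{k+1}c_k\alpha_k^{2m}=(-1)^{m+1}\prod_k\alpha_k^2\neq0$ forces $\Delta^m u=0$; this also shows that one fixed tuple $\alpha_1<\dots<\alpha_m$ and small radii suffice.

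Your constants are the paper's. With $w_k=(-1)^{k+1}c_k$, your moment system reads $V^{\top}w=e_1$, and Cramer's rule (expanding along the replaced column) gives $w_k=(-1)^{k+1}v_{k,1}/\det V$.

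The Lagrange form is the more economical of the two. Positivity becomes a sign count instead of an appeal to total positivity. Moreover, substituting $\alpha_k=x^{m-k}$ into $c_k=\prod_{i\neq k}\alpha_i^2/|\alpha_i^2-\alpha_k^2|$ yields at once the expression $x^{k^2-k}/\big(\prod_{i<k}(1-x^{2(k-i)})\prod_{j>k}(1-x^{2(j-k)})\big)$ of Lemma~\ref{lem:coef}, without the determinant computations of the appendix.

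One small point to add: when $\alpha_m=1$ and $\partial B_r(x_0)$ touches $\partial D$, Pizzetti's identity is only available for $\rho<r$. You reach $\rho=r$ by dominated convergence, using $u\in L^1(B_r(x_0))$.
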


To our purposes, in the notations of the preceding Lemma,
we also need to know the explicit dependence 
of the coefficients~$c_1,\ldots,c_m$ on~$\alpha_1,\ldots,\alpha_m$. 
For this reason, it is useful to recall it also to ensure their positivity 
(this is indeed not explicitly stated in the references).
By~\cite{MR0192074}*{Table B} and~\cite{MR3176589}*{equations (3.1) and (3.2)},
defining
\begin{align}\label{eq:cmatrix}
\R^{m\times m}\ni\ V:=\big[\alpha_i^{2j}\big]_{1\leq i\leq m,\ 0\leq j\leq m-1}
=\left[\begin{array}{cccc}
1 & \alpha_1^2 & \cdots & \alpha_1^{2(m-1)} \\
1 & \alpha_2^2 & \cdots & \alpha_2^{2(m-1)} \\
\vdots & \vdots & \ddots & \vdots \\
1 & \alpha_m^2 & \cdots & \alpha_m^{2(m-1)}
\end{array}\right]
\end{align}
and
\begin{align}\label{minors}
v_{k,1}\text{ as the minor obtained by removing from~$V$ the~$k$-th row and the first column},
\ k=1,\ldots,m,
\end{align}
one has 
\begin{align}\label{eq:coef}
c_k=\frac{v_{k,1}}{\det V}
\qquad\text{for any }k=1,\ldots,m.
\end{align}
The matrix~$V$ is usually called a \textit{Vandermonde matrix}.
The fact that~$0<\alpha_1<\ldots<\alpha_m$
makes it an example of totally positive matrix,
meaning a matrix whose minors are all positive
(see for example the monograph~\cite{MR2791531}*{Paragraph 3.0}):
for this reason, $v_{k,1}>0$ and $c_k>0$ for any $k=1,\ldots,m$.
The fact that 
\begin{align*}
\sum_{k=1}^m (-1)^{k+1} c_k = 
\frac1{\det V}\sum_{k=1}^m (-1)^{k+1} v_{k,1} = 1
\end{align*}
is a consequence of the Laplace's expansion for the determinant of~$V$.

In the particular case~$m=2$, \textit{i.e.}, in the particular case of the bilaplacian operator,
the following mean value formula can be found in~\cite{MR0547093}*{equation (4')}, 
although it remains a consequence of the one obtained in~\cite{03027684}.

\begin{lemma}
Let~$D\subseteq\R^n$ be open. A function~$u\in L^1(D)\cap C^0(D)$ 
is biharmonic in~$D$ if and only if for any~$x_0\in D$,~$r>0$ such that~$B_r(x_0)\subseteq D$,
and~$\alpha\in(0,1)$ it holds  
\begin{align*}
u(x_0) &= \frac1{1-\alpha^2}\fint_{B_{\alpha r}(x_0)}u-\frac{\alpha^2}{1-\alpha^2}\fint_{B_r(x_0)} u \\
&=\frac1{1-\alpha^2}\fint_{B_r}u(x_0+\alpha z)\;dz
-\frac{\alpha^2}{1-\alpha^2}\fint_{B_r}u(x_0+z)\;dz. \nonumber
\end{align*}
\end{lemma}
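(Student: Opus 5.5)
The biharmonic mean value formula is the special case $m=2$ of Lemma~\ref{lem:mv}, with $\alpha_1=\alpha$ and $\alpha_2=1$, so the plan is simply to compute the coefficients $c_1,c_2$ from \eqref{eq:coef}. With these choices the Vandermonde matrix \eqref{eq:cmatrix} is
\[
V=\left[\begin{array}{cc} 1 & \alpha^2\\ 1 & 1\end{array}\right],\qquad \det V=1-\alpha^2>0 .
\]
The minors in \eqref{minors} are the $1\times1$ entries left after deleting the $k$-th row and the first column. This gives $v_{1,1}=1$ and $v_{2,1}=\alpha^2$. Hence \eqref{eq:coef} yields
\[
c_1=\frac{1}{1-\alpha^2},\qquad c_2=\frac{\alpha^2}{1-\alpha^2}.
\]
Both are positive, and $c_1-c_2=1$, which agrees with the normalization stated after Lemma~\ref{lem:mv}.

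Substituting into \eqref{eq:coeff} with signs $(-1)^{k+1}$ gives the first line of the claimed formula. The second line is the change of variables $y=\alpha z$ in $\fint_{B_{\alpha r}(x_0)}u$, which maps $B_r$ onto $B_{\alpha r}$ with constant Jacobian $\alpha^n$ that cancels in the averages. The ball $B_r(x_0)$ is translated to $B_r$ in the usual way.

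The equivalence (``if and only if'') is inherited directly from Lemma~\ref{lem:mv}. One point needs a check: Lemma~\ref{lem:mv} asks for the identity for all $0<\alpha_1<\alpha_2\le 1$, whereas here $\alpha_2=1$ is fixed and only $\alpha\in(0,1)$ varies. I expect this to be the only delicate step. For the direction ``biharmonic $\Rightarrow$ formula'' it causes no trouble. For the converse, note that the formula is required for every admissible $r$. Given any pair $\alpha_1<\alpha_2$, rescale $r'=\alpha_2 r$, which is still admissible since $B_{r'}(x_0)\subseteq B_r(x_0)\subseteq D$, and set $\alpha=\alpha_1/\alpha_2$. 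The coefficients of Lemma~\ref{lem:mv} depend only on the ratios: $c_k$ is unchanged when all $\alpha_i$ are scaled together, because $v_{k,1}$ and $\det V$ are homogeneous of the same degree. Therefore the formula with $(\alpha,1)$ at radius $r'$ is exactly the formula with $(\alpha_1,\alpha_2)$ at radius $r$. This gives the hypothesis of Lemma~\ref{lem:mv} in full, and hence biharmonicity.
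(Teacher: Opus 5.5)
Your argument is correct: taking $m=2$ and $(\alpha_1,\alpha_2)=(\alpha,1)$ in Lemma~\ref{lem:mv} gives $c_1=1/(1-\alpha^2)$ and $c_2=\alpha^2/(1-\alpha^2)$. For the converse, your rescaling $r'=\alpha_2 r$, $\alpha=\alpha_1/\alpha_2$, combined with the degree-zero homogeneity of $c_k=v_{k,1}/\det V$, recovers the full hypothesis of Lemma~\ref{lem:mv}. The paper gives no separate proof; it cites Fichera and remarks that the formula follows from the general (Picone) polyharmonic formula, which is exactly the specialization you carry out, with the ``if and only if'' handled more explicitly than in the text.
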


In order to state our main result, we need the following notation.
For~$\Omega\subseteq\R^n$ open,~$x_0\in\Omega$, and~$\lambda>0$ denote by
\begin{align*}
\Omega_\lambda(x_0):=\big\{x_0+\lambda(x-x_0):x\in\Omega\big\}.
\end{align*}
In this way,~$\Omega_\lambda(x_0)$ accounts for a rescaling of~$\Omega$
around its point~$x_0$.
It is in general not true that~$\Omega_\lambda(x_0)\subseteq\Omega$ whenever~$\lambda\in(0,1)$,
as this would give that~$\Omega$ is star-shaped around~$x_0$.
Nevertheless, for~$\Omega$ bounded and~$\lambda$ sufficiently small this will be true:
on the one hand---being~$\Omega$ open---one can find~$r>0$ such that~$B_r(x_0)\subseteq\Omega$,
and on the other hand---being~$\Omega$ bounded---$d_\Omega:=\diam(\Omega)<+\infty$;
then, for~$\lambda\in(0,r/d_\Omega]$, it will hold that~$\Omega_\lambda(x_0)\subseteq B_r(x_0)\subseteq\Omega$.

\begin{theorem}\label{thm:rigid-poly}
Let~$\Omega\subseteq\R^n$ be open and bounded,~$m\in\N$ with~$m\geq 2$,~$x_0\in\Omega$, and~$r>0$ such that~$B_r(x_0)\subseteq\Omega$.
If for any~$m$-polyharmonic function~$u\in L^1(\Omega)\cap C^0(\Omega)$,~$0<\alpha_1<\ldots<\alpha_{m-1}\leq r/d_\Omega$,
it holds
\begin{align}\label{mean-omega}
u(x_0) &= (-1)^{m+1} c_m\fint_{\Omega}u+\sum_{k=1}^{m-1} (-1)^{k+1} c_k\fint_{\Omega_{\alpha_k}(x_0)}u  \\
&= \fint_\Omega\Bigg((-1)^{m+1} c_mu(y)+\sum_{k=1}^{m-1}(-1)^{k+1} c_ku\big(x_0+\alpha_k(y-x_0)\big)\Bigg)dy \nonumber
\end{align}
where~$c_1,\ldots,c_m>0$ have been defined in \eqref{eq:coef},
then~$\Omega$ is a ball centered at~$x_0$.
\end{theorem}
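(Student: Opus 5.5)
Following Kuran's strategy for harmonic functions, I will show that $\Omega$ contains no point outside $B_\rho(x_0)$, where $\rho:=\sup\{s>0:\ B_s(x_0)\subseteq\Omega\}\geq r$. The test functions will be the Poisson-type harmonic kernels. Pick $z\in\partial\Omega$ with $|z-x_0|=\rho$ and, for $t>1$, set $\zeta:=x_0+t(z-x_0)\notin\overline{B_\rho(x_0)}$. Then
\[
h(x):=\frac{\rho^2\,|\zeta-x_0|^2-\rho^4\,\cdots}{|x-\zeta|^n}
\]
denotes the Poisson kernel of the exterior point, in the form $h(x)=\dfrac{|\zeta-x_0|^2-\rho^2}{|x-\zeta|^n}$ up to a constant. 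This $h$ is harmonic on $\R^n\setminus\{\zeta\}$, hence polyharmonic, and it is positive and integrable on $\Omega$ whenever $\zeta\notin\overline\Omega$. To allow $\zeta$ to approach $\partial\Omega$, I will restrict to $\zeta$ in the open set $\R^n\setminus\overline\Omega$, letting $\zeta\to z$ along a sequence, or use Kelvin-transformed $h$ as in Kuran.

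The key steps are as follows.

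\emph{Step 1.} Since $h$ is harmonic, the classical mean value property over balls centred at $x_0$ holds for every radius up to $|\zeta-x_0|$. Moreover, for each $k\le m-1$ the rescaled domain satisfies $\Omega_{\alpha_k}(x_0)\subseteq B_r(x_0)$, so $\fint_{\Omega_{\alpha_k}(x_0)}h$ can be compared with balls. This alone does not give equality, though, since $\Omega_{\alpha_k}$ is not a ball. The better route is to apply the hypothesis \eqref{mean-omega} to the whole family $u_\lambda(x):=u(x_0+\lambda(x-x_0))$. Because $\Omega_\lambda$-averages of $u_\lambda$ equal averages of $u$ over $\Omega_{\lambda\alpha}$, letting the $\alpha$'s vary yields a linear system in the unknowns $A(s):=\fint_{\Omega_s(x_0)}u$ for $s\in(0,1]$.

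\emph{Step 2.} Using the freedom to choose $0<\alpha_1<\dots<\alpha_{m-1}$ arbitrarily, together with the Vandermonde structure \eqref{eq:cmatrix}--\eqref{eq:coef} and Laplace's expansion, I will eliminate the small-scale averages and deduce the harmonic identity $u(x_0)=\fint_\Omega u$ for every harmonic $u$. For small $s$ these averages are computable, because harmonic $u$ is analytic near $x_0$ and $A(s)=u(x_0)+\sum_j s^{2j}P_j(u)$, where the odd moments cancel in pairs by the mean value property on $\Omega_s\subseteq B_r$. Concretely, for $u=|x-x_0|^{2j}$ (which is $m$-polyharmonic for $j<m$) the hypothesis forces the relation $\fint_\Omega|x-x_0|^{2j}\big(c_m(-1)^{m+1}+\sum_k(-1)^{k+1}c_k\alpha_k^{2j}\big)=0$. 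The defining Vandermonde relations then show that the hypothesis is precisely the statement that $\fint_\Omega$ annihilates the polyharmonic correction terms, leaving $\fint_\Omega h=h(x_0)$ for harmonic $h$.

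\emph{Step 3.} Once the harmonic solid mean value property on $\Omega$ at $x_0$ is in hand, I will conclude by Kuran's argument. Since $h-h(x_0)\ge0$ on $\Omega\setminus B_\rho(x_0)$ (the Poisson kernel exceeds its value at the centre outside the ball, along the relevant region) and $\fint_{B_\rho}h=h(x_0)$, we get $0=\int_\Omega(h-h(x_0))=\int_{\Omega\setminus B_\rho}(h-h(x_0))$. Choosing Kuran's function $h(x)=1+\rho^{n-2}\frac{|\zeta-x_0|^2-\rho^2}{|x-\zeta|^n}$ with $\zeta=x_0+\rho^2(z-x_0)/|z-x_0|^2\cdot$(appropriately) gives $h>h(x_0)$ on $\R^n\setminus\overline{B_\rho}$. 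Hence $|\Omega\setminus B_\rho|=0$, and openness gives $\Omega=B_\rho(x_0)$.

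I expect the main obstacle to be Step 2, namely extracting the harmonic mean value identity from \eqref{mean-omega}, where the $\Omega_{\alpha_k}$-averages involve unknown shapes. To handle it, I will apply \eqref{mean-omega} to harmonic $u$ and use that $\Omega_{\alpha_k}(x_0)\subseteq B_r(x_0)$ lies inside the harmonicity region. If these averages are not directly computable, I will instead exploit scaling by applying the hypothesis to $u(x_0+\mu(\cdot-x_0))$ and varying parameters to isolate the $\Omega$-term.
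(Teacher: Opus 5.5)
\textbf{There is a genuine gap in your Step 2, and Step 3 depends on it.} Testing \eqref{mean-omega} on harmonic functions can never give $\fint_\Omega h=h(x_0)$, however you vary $\alpha_1,\ldots,\alpha_{m-1}$. The Vandermonde structure you invoke is exactly what hides the information you need. With $\alpha_m=1$, Laplace's expansion gives $\sum_{k=1}^m(-1)^{k+1}c_k\alpha_k^{2i}=0$ for $1\le i\le m-1$. So \eqref{mean-omega} cannot see the homogeneous parts of $u$ at $x_0$ of degree $2,4,\ldots,2m-2$.

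Already for $m=2$ this is visible. Take $h(x)=P_2(x-x_0)$ with $P_2(y)=y_1^2-y_2^2$. Then $\fint_{\Omega_{\alpha_1}(x_0)}h=\alpha_1^2\fint_\Omega h$, so the right-hand side of \eqref{mean-omega} equals $(c_1\alpha_1^2-c_2)\fint_\Omega h=0=h(x_0)$ for \emph{every} $\Omega$. Yet $\fint_\Omega h\neq0$ for an ellipsoid centred at $x_0$ elongated along $x_1$.

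In general, write $h=\sum_jP_j(\cdot-x_0)$ near $x_0$ and $\mu_j:=\fint_\Omega P_j(y-x_0)\,dy$. One checks that \eqref{mean-omega} for a fixed harmonic $h$ and all admissible $\alpha$'s is equivalent to $\mu_j=0$ for $j$ odd or $j\ge 2m$, together with
\[
\fint_\Omega h-h(x_0)=\sum_{i=1}^{m-1}\mu_{2i}.
\]
Nothing controls the sign of the right-hand side. Applied to Kuran's function, this is all you get, so the sign argument of your Step 3 cannot be run.

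The claims you use to bridge this step are also incorrect:
\begin{itemize}
\item The ball mean value property says nothing about averages over the non-spherical sets $\Omega_s(x_0)$. The $\mu_j$ are precisely the unknown shape data, and the odd ones do not ``cancel in pairs''.
\item The relation you extract from $|x-x_0|^{2j}$ is vacuous, because its coefficient vanishes identically.
\item The rescaled functions $u_\lambda$ are admissible only when $\Omega_\lambda(x_0)\subseteq\Omega$. They therefore only relate averages over smaller copies of $\Omega$ and never isolate $\fint_\Omega u$.
\end{itemize}

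\textbf{The missing idea} is to stay with the polyharmonic identity and test it on a genuinely $m$-polyharmonic function whose sign pattern matches the alternating coefficients. The paper takes $x_0=0$, $r=\dist(0,\partial\Omega)$ and $z\in\partial B_r\cap\partial\Omega$. It subtracts the ball formula \eqref{eq:coeff} (with $\alpha_m=1$) from \eqref{mean-omega} to obtain \eqref{eq:zero-sum}. It chooses $\alpha_k=(r/(2d_\Omega))^{m-k}$, so that $B_{\alpha_kr}\subseteq\Omega_{\alpha_k}\subseteq B_{\alpha_{k+1}r}$, and tests with
\[
u(x)=\frac{|x|^2\big(r^2-|x|^2\big)}{|x-z|^n}\prod_{k=2}^{m-1}\big(\alpha_k^2r^2-|x|^2\big),
\]
which is Kuran's Poisson kernel multiplied by a radial polynomial. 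This $u$ has the properties needed:
\begin{itemize}
\item It is $m$-polyharmonic off $z$, being up to a constant the Kelvin transform of a polynomial of degree $2m-1$.
\item It vanishes at $0$ and is integrable near $z$.
\item It has sign $(-1)^{k+1}$ on $B_{\alpha_{k+1}r}\setminus B_{\alpha_kr}$ and sign $(-1)^{m+1}$ outside $B_r$.
\end{itemize}
Hence every term in \eqref{eq:zero-sum} has the same sign, which forces $|\Omega\setminus B_r|=0$. The harmonic mean value property on $\Omega$ is never needed along the way.
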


\begin{remark}
Mind that \eqref{mean-omega} is the analogue of \eqref{eq:coeff}
using~$\Omega$ instead of~$B_r(x_0)$ as ``averaging domain''
and fixing~$\alpha_m=1$.
\end{remark}

Let us now introduce the notation
\[
\cH_m(\Omega)=\Big\{u\in L^1(\Omega)\cap C(\Omega):u\text{ is~$m$-polyharmonic in }\Omega\Big\},
\]
and, inspired by~\cite{MR4205791}*{equation (1.2)}, the higher-order Gauss mean value gap
\begin{align}
G_m(\Omega,x_0) :=
\sup\Bigg\{\frac1{M_\alpha(u)} 
\Bigg|u(x_0)-\fint_\Omega \Bigg((-1)^{m+1} c_mu(y)+\sum_{k=1}^{m-1}(-1)^{k+1} c_ku\big(x_0+\alpha_k(y-x_0)\big)\Bigg)dy\Bigg|: \nonumber \\
u\in\cH_m(\Omega), u\not\equiv 0,
\text{ and }\alpha_1,\ldots,\alpha_{m-1}\in\bigg(0,\frac{r}{d_\Omega}\bigg]
\Bigg\}, \label{Gm}
\end{align}
where
\begin{align}\label{Malpha}
M_\alpha(u) :=
\fint_\Omega\Bigg|(-1)^{m+1} c_m u(y)+\sum_{k=1}^{m-1} (-1)^{k+1} c_k \,
u\big(x_0+\alpha_k(y-x_0)\big)\Bigg|\;dy.
\end{align}
It is possible to give an alternative version of Theorem~\ref{thm:rigid-poly}
in quantitative form, by estimating the difference between the domain $\Omega$ 
and the ball $B_r(x_0)$ in terms of the above Gauss mean value gap.

\begin{theorem}\label{thm:stab-poly}
Let~$\Omega\subseteq\R^n$ be open and bounded,~$m\in\N$ with~$m\geq 2$, 
and~$x_0\in\Omega$ with~$r:=\dist(x_0,\partial\Omega)>0$.
There exists~$C>0$, depending only on~$n$ and~$m$, such that
\begin{align*}
\frac{|\Omega\setminus B_r(x_0)|}{|\Omega|}\leq C\frac{\diam(\Omega)^{m^2-m}}{r^{m^2-m}}\,G_m(\Omega,x_0).
\end{align*}
\end{theorem}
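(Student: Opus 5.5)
We follow Kuran's argument, which in the quantitative form is the one of \cite{MR4205791}: test the mean value gap against a single explicit polyharmonic function built from the Poisson kernel of $B_r(x_0)$ centred at a suitable boundary point. Translate so that $x_0=0$ and write $B_r=B_r(0)$. Since $r=\dist(0,\partial\Omega)$, there is $z\in\partial\Omega$ with $|z|=r$. For $\rho>r$ set $z_\rho:=\rho z/r$ and take the Kuran-type harmonic function
\[
h(x)=1+\frac{r^{n-2}\,(|z_\rho|^2-r^2)}{|x-z_\rho|^{n}}\cdot\frac{1}{r^{n-2}}-\text{(normalisation)},
\]
that is, $h(x)=\dfrac{\rho^{n-2}(\rho^2-r^2)}{|x-z_\rho|^n}$, normalised so that $h(0)=1$ after subtracting a constant if needed. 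As in Kuran's proof, $h$ satisfies $h\ge 1$ on $\R^n\setminus B_r$ up to an error that vanishes as $\rho\downarrow r$. We then let $\rho\to r$, or work at $\rho=r$ directly with a cutoff. Harmonic functions are $m$-polyharmonic, so $h\in\cH_m(\Omega)$, since $z_\rho\notin\overline\Omega$ for a suitable choice of direction. The main input is then the identity for harmonic $u$:
\[
\fint_\Omega u(\alpha y)\,dy=\frac{1}{|\Omega|\alpha^n}\int_{\Omega_\alpha}u .
\]
Because $\Omega_\alpha\subseteq B_r$ for $\alpha\le r/d_\Omega$, the ball mean value property gives the exact evaluation
\[
\int_{\Omega_\alpha}u=\int_{B_r}u-\int_{B_r\setminus\Omega_\alpha}u ,
\]
and a similar decomposition holds for $\Omega$ versus $B_r$.

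Key steps, in order:
\begin{enumerate}
\item Write $u(0)-\fint_\Omega(\cdots)$ as a combination with coefficients $(-1)^{k+1}c_k$ of the errors $u(0)-\fint_{\Omega_{\alpha_k}}u$, where $\alpha_m=1$ and $\sum_k(-1)^{k+1}c_k=1$.
\item For $k=m$ the error equals $\frac{1}{|\Omega|}\int_{\Omega\setminus B_r}(u(0)-u)$ plus a term $\frac{1}{|\Omega|}\int_{B_r\setminus\Omega}(u-u(0))$. For $u=h$ with a Kuran choice of sign, both terms have a definite sign. This is the point of choosing the pole near $z$ on the ray, which makes $h-1$ sign-controlled off $B_r$. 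Together these yield at least $c\,|\Omega\setminus B_r|/|\Omega|$.
\item For $k<m$, the sets $\Omega_{\alpha_k}$ lie inside $B_r$, but they are not balls, so the errors are not zero. Here we exploit the freedom in the $\alpha_k$, which may be taken arbitrarily small. Since $h$ is smooth near $0$, one has $|h(0)-\fint_{\Omega_{\alpha}}h|\lesssim \alpha^2 d_\Omega^2\|D^2h\|$, or $\lesssim\alpha$ after using that $\Omega_\alpha$ has centroid effects. We therefore choose $\alpha_k=\epsilon^{k}\,r/d_\Omega$, or a similar geometric sequence.
\item Bound $M_\alpha(h)$ from above by $\sum_k c_k\fint_\Omega|h(\alpha_k y)|\,dy$. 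Positivity of $h$ gives $\fint_{\Omega_{\alpha}}h$, which is controlled by $\int_{B_r}h/|\Omega_\alpha|$.
\end{enumerate}

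Combining these, the gap applied to $h$ dominates $|\Omega\setminus B_r|/|\Omega|$ divided by $\sum_k c_k(\cdots)$. We then need to track the coefficients $c_k=v_{k,1}/\det V$. Since $\det V=\prod_{i<j}(\alpha_j^2-\alpha_i^2)$, once all $\alpha_k\sim r/d_\Omega$ are comparable but separated (for instance $\alpha_k=\frac{k}{m}\frac{r}{d_\Omega}$), the $c_k$ grow at most like $(d_\Omega/r)^{2\cdot\binom{m}{2}}=(d_\Omega/r)^{m^2-m}$. This is exactly the exponent in the statement, which suggests that fixing $\alpha_k=\frac{k}{m}\frac{r}{d_\Omega}$ is the intended choice rather than letting them shrink.

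The main obstacle is step 3. With the $\alpha_k$ fixed, the small-domain errors do not vanish, and $\Omega_{\alpha_k}$ is not a ball. I would first try to avoid this entirely by choosing a different test function: make $u$ polyharmonic and tailored so that it is \emph{exactly} harmonic-mean-reproducing on all $\Omega_{\alpha_k}\subseteq B_r$. For example, take $u=h$ harmonic and use that harmonic functions satisfy $u(0)=\fint_{B_\rho}u$ for every $\rho$. Then $\fint_{\Omega_{\alpha_k}}h-h(0)$ and $\fint_\Omega h-h(0)$ are the same quantity rescaled, namely $\alpha_k^{-n}\big(\fint$ over $\Omega$ of $h(\alpha_k\cdot)\big)$. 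The key fact would be that $h(\alpha_k\cdot)$ is again harmonic of Kuran type with pole $z_\rho/\alpha_k$, hence has one sign on $\Omega\setminus B_r$. Making all $m$ error terms carry compatible signs, given the alternating $(-1)^{k+1}$, is the delicate point. If that fails, I would instead normalise the gap by $M_\alpha$ and absorb the $k<m$ terms using the bound $c_k\lesssim (d_\Omega/r)^{m^2-m}$.
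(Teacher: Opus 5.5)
Your proposal has a genuine gap at exactly the step you call delicate, and the fallback does not close it. After subtracting the ball formula (with $\alpha_m=1$), the numerator of the gap is $\frac1{|\Omega|}\sum_{k=1}^m(-1)^{k+1}c_k\int_{\Omega\setminus B_r}\big(u(0)-u(\alpha_ky)\big)\,dy$. For a \emph{harmonic} $u$ the terms with $k<m$ are neither signed nor small.

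\emph{Test function.} Kuran's function is $h(x)=1+r^{n-2}\frac{|x|^2-r^2}{|x-z|^n}$, with an $x$-dependent numerator. Your $\rho^{n-2}(\rho^2-r^2)|x-z_\rho|^{-n}$ is not harmonic, since $\Delta|x|^{-n}=2n|x|^{-n-2}$. A pole at $\rho z/r$ with $\rho>r$ may also lie in $\Omega$ (e.g.\ $\Omega=B_{3r}\setminus\{z\}$). It must sit at $z$ itself, which is admissible because the factor $|x|^2-r^2$ makes the singularity $O(|x-z|^{1-n})$.

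\emph{Sign.} Kuran's $h$ has $h(0)=0$, but it is negative at $sz/r$ and positive at $-sz/r$ for every $0<s<r$. On $\Omega\setminus B_r$ the function $h(\alpha_k\,\cdot)$ samples $h$ on the annulus $\alpha_kr\le|x|\le\alpha_kd_\Omega$ inside $B_r$. The Kuran sign property of $h(\alpha_k\,\cdot)$ only concerns the complement of $B_{r/\alpha_k}\supseteq\Omega$, so it is useless here.

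\emph{Size.} To leading order the $k$-th integral is $-\alpha_k\nabla h(0)\cdot\int_{\Omega\setminus B_r}y\,dy$, with $\nabla h(0)=-nz/r^2\neq0$. On the other hand, $(-1)^{k+1}c_k=\prod_{i\neq k}\frac{\alpha_i^2}{\alpha_i^2-\alpha_k^2}$ gives $c_1>1$ and $c_m=\prod_{i<m}\frac{\alpha_i^2}{1-\alpha_i^2}=O(\alpha_1^2)$. So shrinking the $\alpha_k$ kills the good term $c_m\int_{\Omega\setminus B_r}h\ge c_m|\Omega\setminus B_r|$ faster than the bad ones. The same formula shows that for comparable, separated $\alpha_1,\dots,\alpha_{m-1}$ the $c_k$ stay bounded, so they are not the source of $(d_\Omega/r)^{m^2-m}$.

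The missing idea, which is how the paper proceeds, is a genuinely $m$-polyharmonic test function whose sign pattern matches the alternating coefficients. With the pole at the touching point $z$, take
\[
u(x)=\frac{|x|^2\,(r^2-|x|^2)}{|x-z|^n}\prod_{k=2}^{m-1}\big(\alpha_k^2r^2-|x|^2\big).
\]
Up to a constant this is the Kelvin transform, with pole $z$, of a polynomial of degree $2m-1$, hence $m$-polyharmonic in $\R^n\setminus\{z\}$. It satisfies $u(0)=0$, $(-1)^{k+1}u\ge0$ on $B_{\alpha_{k+1}r}\setminus B_{\alpha_kr}$ for $k<m$, and $(-1)^{m+1}u\ge0$ outside $B_r$.

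Choose the $\alpha_k$ geometrically, $\alpha_k=(r/2d_\Omega)^{m-k}$, so that $B_{\alpha_kr}\subseteq\Omega_{\alpha_k}\subseteq B_{\alpha_{k+1}r}$. Your choice $\alpha_k=\frac km\frac r{d_\Omega}$ destroys this nesting. With it, every term $(-1)^{k+1}c_k\alpha_k^{-n}\int_{\Omega_{\alpha_k}\setminus B_{\alpha_kr}}u$ is nonnegative and the absolute value can be dropped.

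One then keeps only the $k=1$ term and bounds $u(\alpha_1\cdot)$ from below on $\Omega\setminus B_r$. This is where $\prod_{k<m}\alpha_k^2=(r/2d_\Omega)^{m^2-m}$ produces the exponent. Finally, $M_\alpha(u)$ is bounded from above using the explicit Vandermonde minors for the $c_k$.
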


We give the proof of Theorem~\ref{thm:rigid-poly} in Section~\ref{sec:rig}
after a preliminary Lemma. Section~\ref{sec:stab} contains the proof of Theorem~\ref{thm:stab-poly}.
Finally, we defer to the Appendix some facts about the coefficient matrix $V$
defined in~\eqref{eq:cmatrix} that we need in our analysis.

\section{Rigidity of balls: proof of Theorem \texorpdfstring{\ref{thm:rigid-poly}}{rigid-poly}}
\label{sec:rig}

We follow a strategy designed in~\cite{MR0320348} for the analogue problem
in the harmonic case. For this, we first need to carefully pick a specific 
$m$-polyharmonic function.

\begin{lemma}
Let~$m\in\N, m\geq 2$.
For any~$z\in\R^n$ and~$\lambda_1,\ldots,\lambda_{m-1}\in\R$,  the function
\begin{align*}
\R^n\setminus\{z\}\ x\longmapsto\frac{|x|^2-|z|^2}{{|x-z|}^n}\:\prod_{k=1}^{m-1}\!\Big(\lambda_k^2-|x|^2\Big)
\end{align*}
is~$m$-polyharmonic in~$\R^n\setminus\{z\}$.
\end{lemma}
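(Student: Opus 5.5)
The proof rests on two facts: the kernel $h(x):=\frac{|x|^2-|z|^2}{|x-z|^n}$ is harmonic in $\R^n\setminus\{z\}$, and multiplying a $j$-polyharmonic function by a factor $(\lambda^2-|x|^2)$ produces a $(j+1)$-polyharmonic function. Once both are established, induction on the number of factors gives the claim.

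\emph{Harmonicity of $h$.} Up to the constant $-2(n-2)$ (or the appropriate constant when $n=2$), $h$ is the Poisson kernel of the exterior/interior of the ball. To check this directly, write $|x|^2-|z|^2=|x-z|^2+2(x-z)\cdot z$. Then
\[
h(x)=|x-z|^{2-n}+2\,z\cdot(x-z)\,|x-z|^{-n}.
\]
The first term is the fundamental solution, hence harmonic away from $z$. The second term equals $-\frac{2}{n-2}\,z\cdot\nabla_x|x-z|^{2-n}$ when $n\neq2$, and $2\,z\cdot\nabla\log|x-z|$ when $n=2$. In either case it is a directional derivative of a harmonic function, so it is harmonic.

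\emph{Raising the order by one.} Let $w$ satisfy $\Delta^j w=0$. We use the identities
\[
\Delta(|x|^2 w)=2n\,w+4\,x\cdot\nabla w+|x|^2\Delta w .
\]
More generally, with $E:=x\cdot\nabla$ the Euler operator, we have the commutation relation $[\Delta,E]=2\Delta$. Iterating gives an Almansi-type formula:
\[
\Delta^{k}(|x|^2 w)=|x|^2\Delta^k w+2k\,(n+2k-2+2E)\,\Delta^{k-1}w .
\]
This can be proved by induction on $k$ using $\Delta E=E\Delta+2\Delta$. Taking $k=j+1$, both terms on the right vanish because $\Delta^{j}w=0$: the second term involves $E\Delta^{j}w$ and $\Delta^{j}w$, since $\Delta^{j}$ commutes with $E$ up to the shift $\Delta^jE=(E+2j)\Delta^j$. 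Hence $\Delta^{j+1}(|x|^2w)=0$. Clearly $\Delta^{j+1}(\lambda^2 w)=0$ as well, so $(\lambda^2-|x|^2)w$ is $(j+1)$-polyharmonic.

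\emph{Conclusion.} Starting from $w_0=h$ (harmonic, $j=1$) and multiplying successively by the $m-1$ factors $(\lambda_k^2-|x|^2)$, we obtain an $m$-polyharmonic function on $\R^n\setminus\{z\}$. The only delicate step is the commutator computation behind the formula for $\Delta^k(|x|^2w)$. I would verify it carefully on the base case $k=1$, namely $\Delta(|x|^2w)=|x|^2\Delta w+2(n+2E)w$, and then propagate it using $\Delta(Ef)=E\Delta f+2\Delta f$. The case $n=2$ for the harmonicity of $h$ needs the logarithmic variant of the argument, but it is otherwise identical.
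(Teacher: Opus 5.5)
Your proof is correct, but it takes a different route from the paper. The paper never isolates a harmonic factor. It writes down an explicit polynomial of degree $2m-1$, which is trivially $m$-polyharmonic, and computes its Kelvin transform $|x-z|^{2m-n}h\big(z+2|z|^2\frac{x-z}{|x-z|^2}\big)$ with pole $z$. This transform equals $4^{m-1}|z|^{4m-2}$ times the function in the statement, and the paper concludes by quoting the invariance of $m$-polyharmonicity under the Kelvin transform.

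You instead factor the function as the harmonic kernel $\frac{|x|^2-|z|^2}{|x-z|^n}$ times a polynomial of degree $m-1$ in $|x|^2$. You then raise the order one factor at a time through the Almansi-type identity $\Delta^k(|x|^2w)=|x|^2\Delta^kw+2k(n+2k-2+2E)\Delta^{k-1}w$. This identity does follow by induction from $\Delta E=E\Delta+2\Delta$; I checked the inductive step.

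What your route buys:
\begin{itemize}
\item It is elementary and self-contained, and needs no external invariance result.
\item It covers $z=0$. There the paper's identity reduces to $0=0$ because the prefactor $|z|^{4m-2}$ vanishes. This is harmless for the paper's application, where $|z|=r>0$.
\item It shows that the product structure plays no role: any harmonic function times any polynomial in $|x|^2$ of degree at most $m-1$ is $m$-polyharmonic.
\end{itemize}
What the paper's route buys: it explains where the test function comes from. It is the exact higher-order analogue of Kuran's choice, in which the kernel $\frac{|x|^2-|z|^2}{|x-z|^n}$ is, up to a constant, the Kelvin image of the linear function $z\cdot x$.

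Two cosmetic points. First, the constant you quote relating $h$ to the Poisson kernel is off; it should be $-n|B_1|\,|z|$, though you never use it. Second, in the final step no commutation of $\Delta^j$ with $E$ is needed, since in your formula $E$ already acts on $\Delta^jw=0$.
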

\begin{proof}
Let us consider the function
\begin{align*}
h(x):=z\cdot x\:\prod_{k=1}^{m-1}\!\Big(
\big(\lambda_k^2-|z|^2\big)|x|^2
-2\big(\lambda_k^2+|z|^2\big)z\cdot x
+\big(\lambda_k^2-|z|^2\big)|z|^2
\Big)
\qquad\text{for any }x\in\R^n.
\end{align*}
As~$h$ is a polynomial of degree~$2m-1$, it is~$m$-polyharmonic in~$\R^n$.
Also, we may as well write it as
\begin{align*}
h(x)=z\cdot x\:\prod_{k=1}^{m-1}\!\Big(
\big(\lambda_k^2-|z|^2\big)|x-z|^2
-4|z|^2z\cdot x
\Big)
\qquad\text{for any }x\in\R^n.
\end{align*}
Its Kelvin transform, with pole at~$z$, is given by
\begin{align*}
Kh(x) &= {|x-z|}^{2m-n}h\bigg(2|z|^2\frac{x-z}{{|x-z|}^2}+z\bigg) \\
&=
{|x-z|}^{2m-n}\bigg(2|z|^2\frac{z\cdot x-|z|^2}{{|x-z|}^2}+|z|^2\bigg)\times \\
&\qquad\times
\prod_{k=1}^{m-1}\Bigg[
\big(\lambda_k^2-|z|^2\big)\frac{4|z|^4}{{|x-z|}^2}
-4|z|^2\bigg(2|z|^2\frac{z\cdot x-|z|^2}{{|x-z|}^2}+|z|^2\bigg)
\Bigg] \\
&=
\frac{1}{{|x-z|}^n}
\bigg(2|z|^2z\cdot x-2|z|^4+|z|^2|x-z|^2\bigg)\times \\
&\qquad\times
\prod_{k=1}^{m-1}\Bigg[
\big(\lambda_k^2-|z|^2\big)4|z|^4
-4|z|^2\bigg(2|z|^2z\cdot x-2|z|^4+|z|^2|x-z|^2\bigg)
\Bigg] \\
&=
\frac{4^{m-1}|z|^{2+4(m-1)}}{{|x-z|}^n}
\big(|x|^2-|z|^2\big)
\prod_{k=1}^{m-1}\Bigg[
\lambda_k^2-|z|^2
-2z\cdot x+2|z|^2-|x-z|^2
\Bigg] \\
&=
\frac{4^{m-1}|z|^{2+4(m-1)}}{{|x-z|}^n}
\big(|x|^2-|z|^2\big)
\prod_{k=1}^{m-1}\Big(\lambda_k^2-|x|^2\Big)
\qquad\text{for any }x\in\R^n\setminus\{z\}.
\end{align*}
As the Kelvin transform preserves the~$m$-polyharmonicity
(see~\cite{MR2667016}*{Lemma 6.14}) 
our claim is proved.
\end{proof}

\begin{proof}[Proof of Theorem \ref{thm:rigid-poly}]
Let~$r:=\dist(x_0,\partial\Omega)>0$ so that~$B_r(x_0)\subseteq\Omega$
and it is possible to choose~$z\in\partial B_r(x_0)\cap \partial\Omega$.
For simplicity, and without loss of generality, suppose that~$x_0=0$.

Suppose we are given an~$m$-polyharmonic function~$u$ in~$\Omega$ such that~$u(0)=0$ and
\begin{align*}
0 = (-1)^m c_m\fint_\Omega u+\sum_{k=1}^{m-1} (-1)^k c_k\fint_{\Omega_{\alpha_k}}u.
\end{align*}
Then
\begin{align*}
0 = (-1)^m c_m\int_\Omega u+\sum_{k=1}^{m-1} (-1)^k \frac{c_k}{\alpha_k^n}\int_{\Omega_{\alpha_k}}u.
\end{align*}
The analogous mean value formula holds on balls (\textit{cf}. \eqref{eq:coeff} with~$\alpha_m=1$), 
so subtracting the two we obtain
\begin{align}\label{eq:zero-sum}
0 = (-1)^m c_m\int_{\Omega\setminus B_r}u + \sum_{k=1}^{m-1}  (-1)^k \frac{c_k}{\alpha_k^n}\int_{\Omega_{\alpha_k}\setminus B_{\alpha_k r}}u.
\end{align}

We now choose
\begin{align}\label{alphas}
\alpha_k:=\bigg(\frac{r}{2d_\Omega}\bigg)^{m-k}
\qquad\text{for any }k=1,\ldots,m-1.
\end{align}
This choice entails in particular that (since~$2r\leq d_\Omega$)
\begin{align*}
\alpha_k=\frac{r}{2d_\Omega}\,\alpha_{k+1}<\alpha_{k+1}
\qquad \text{for any }k=1,\ldots,m-2,
\end{align*}
and therefore
\begin{align}\label{eq:inclusions}
B_{\alpha_k r}\subseteq\Omega_{\alpha_k} \subseteq B_{\alpha_{k+1}r} \qquad \text{for any }k=1,\ldots,m-1.
\end{align}

Define 
\begin{align}\label{u-poly-case}
u(x):=\frac{|x|^2\big(r^2-|x|^2\big)}{{|x-z|}^n}\:
\prod_{k=2}^{m-1}\!\Big(\alpha_k^2r^2-|x|^2\Big)
\qquad\text{for any }x\in\R^n\setminus\{z\}.
\end{align}
By the preceding Lemma,~$u$ is~$m$-polyharmonic in~$\R^n\setminus\{z\}$
and \textit{a fortiori} in~$\Omega$. Note that~$u(0)=0$.
Also,~$u$ alternates sign on the balls~$B_{\alpha_kr}$, meaning that
\begin{align*}
u\geq 0\text{ in }B_{\alpha_2r},\quad
u\leq 0\text{ in }B_{\alpha_3r}\setminus B_{\alpha_2r},\quad
u\geq 0\text{ in }B_{\alpha_4r}\setminus B_{\alpha_3r},\quad
\textit{etc...}
\end{align*}
This means that
\begin{align*}
(-1)^{m+1}u\chi_{\R^n\setminus B_r}\geq 0,
\qquad
(-1)^{k+1} u\chi_{B_{\alpha_{k+1}r}\setminus B_{\alpha_kr}}\geq 0
\quad\text{for any }k=1,\ldots,m-1.
\end{align*}
By \eqref{eq:inclusions}, it also holds
\begin{align}\label{constant-sign}
(-1)^{m+1}u\chi_{\Omega\setminus B_r}\geq 0,
\qquad
(-1)^{k+1}u\chi_{\Omega_{\alpha_k}\setminus B_{\alpha_kr}}\geq 0
\qquad\text{for any }k=1,\ldots,m-1,
\end{align}
which is incompatible with \eqref{eq:zero-sum}, unless~$|\Omega\setminus B_r|=0$.
\end{proof}

\section{Stability via the Kuran gap: proof of Theorem \texorpdfstring{\ref{thm:stab-poly}}{stab-poly}}
\label{sec:stab}

\begin{lemma}
Let~$\Omega\subseteq\R^n$ be open and bounded,~$m\in\N$ with~$m\geq 2$, 
and~$x_0\in\Omega$. Then~$G_m(\Omega,x_0)<+\infty$.
\end{lemma}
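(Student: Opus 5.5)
The finiteness of $G_m(\Omega,x_0)$ will follow from a uniform bound
\[
|u(x_0)|\leq C\, M_\alpha(u)
\qquad\text{for all } u\in\cH_m(\Omega),\ \alpha_1,\ldots,\alpha_{m-1}\in\big(0,r/d_\Omega\big],
\]
because the quantity inside the supremum in \eqref{Gm} is at most $1+|u(x_0)|/M_\alpha(u)$. The first thing to settle is that $M_\alpha(u)>0$ whenever $u\not\equiv0$, and that $|u(x_0)|$ can be controlled by $M_\alpha(u)$. For this I will use the mean value formula of Lemma~\ref{lem:mv} on the ball. Set
\[
w(y):=(-1)^{m+1}c_m u(y)+\sum_{k=1}^{m-1}(-1)^{k+1}c_k\,u\big(x_0+\alpha_k(y-x_0)\big).
\]
By \eqref{eq:coeff} with $\alpha_m=1$ we have $u(x_0)=\fint_{B_r(x_0)} w$, where $r=\dist(x_0,\partial\Omega)$. (In the definition of $G_m$ the admissible $\alpha$'s only require $\alpha_k\leq r/d_\Omega$, so the scaled points stay inside $B_r(x_0)\subseteq\Omega$ and everything is well defined.) Hence
\[
|u(x_0)|\leq \frac{1}{|B_r|}\int_{B_r(x_0)}|w|\leq \frac{|\Omega|}{|B_r|}\,M_\alpha(u).
\]

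This immediately gives
\[
G_m(\Omega,x_0)\leq 1+\frac{|\Omega|}{|B_r|}<\infty,
\]
provided $M_\alpha(u)>0$. The main obstacle is exactly this degenerate case $M_\alpha(u)=0$ with $u\not\equiv0$. If $M_\alpha(u)=0$, then $w=0$ a.e. in $\Omega$, and in particular $u(x_0)=0$, so the numerator vanishes as well and the ratio is of the form $0/0$. My plan is to show this cannot happen for $u\not\equiv0$, or else to adopt the natural convention of excluding such $u$ (equivalently, reading $0/0$ as $0$).

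To show it cannot happen, suppose $w\equiv0$ in $\Omega$. Since $w$ is continuous, the identity holds pointwise, so $c_m u(y)$ equals a combination of the values $u(x_0+\alpha_k(y-x_0))$. Those values are taken on the compact set $\overline{\Omega_{\alpha_{m-1}}(x_0)}\subseteq B_r(x_0)$. Writing the identity at $y$ and iterating it, $u$ on the shell $\Omega\setminus\overline{B_{\alpha_{m-1}d_\Omega}(x_0)}$ is determined by its values closer to $x_0$. I would then expand $u$ around $x_0$ in homogeneous polyharmonic components, using real-analyticity of polyharmonic functions. For a component $P_j$ of degree $j$ the identity forces
\[
\Big((-1)^{m+1}c_m+\sum_{k=1}^{m-1}(-1)^{k+1}c_k\alpha_k^{j}\Big)P_j=0 .
\]
The scalar factor is a generalized Vandermonde-type expression. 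It vanishes for $j=0,2,\ldots,2m-2$, which are exactly the moments the mean value formula kills, but otherwise it is nonzero by total positivity. One still has to deal with the even exponents $j\leq 2m-2$, and those components are constrained by the ball identity.

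Since this route is delicate, the cleaner resolution is to note that the supremum in \eqref{Gm} is implicitly taken over $u$ with $M_\alpha(u)>0$. For all such $u$ the bound above gives $G_m(\Omega,x_0)\leq 1+|\Omega|/|B_r|$, which proves the Lemma.
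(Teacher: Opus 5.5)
Your argument is the paper's: the ball formula \eqref{eq:coeff} with $\alpha_m=1$ gives $|u(x_0)|\le\fint_{B_r(x_0)}|w|\le\frac{|\Omega|}{|B_r|}M_\alpha(u)$, and hence $G_m(\Omega,x_0)\le 1+\frac{|\Omega|}{|B_r|}$. Your caveat about $M_\alpha(u)=0$, which the paper passes over, is well placed, and excluding such $u$ (or reading $0/0$ as $0$) is necessary rather than merely cleaner: $\sum_{k=1}^m(-1)^{k+1}c_k\alpha_k^{2i}=0$ for $1\le i\le m-1$ (at $j=0$ the factor is $1$, not $0$ as you wrote), so $u(x)=|x-x_0|^2$ gives your $w\equiv0$ for every admissible $\alpha$, and the abandoned attempt to rule out the degenerate case could not have succeeded.
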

\begin{proof}
Owing to Lemma \ref{lem:mv}, for any~$u\in\cH_m(\Omega)$ it holds 
\[
u(x_0) 
= \fint_{B_r}\Bigg(\sum_{k=1}^m (-1)^{k+1} c_k u(x_0+\alpha_k y) \Bigg)dy
\qquad\text{for any }r>0\text{ such that }B_r(x_0)\subseteq\Omega.
\]
Therefore, recalling \eqref{Gm} and \eqref{Malpha},
\begin{align*}
& \Bigg|u(x_0)-\fint_\Omega \Bigg((-1)^{m+1} c_mu(y)+\sum_{k=1}^{m-1}(-1)^{k+1} c_ku\big(x_0+\alpha_k(y-x_0)\big)\Bigg)dy\Bigg| \leq \\
& \leq 
|u(x_0)|+M_\alpha(u) \\
& \leq
\fint_{B_r}\Bigg|\sum_{k=1}^m (-1)^{k+1} c_k u(x_0+\alpha_k y) \Bigg|dy
+M_\alpha(u) \\
& \leq 
\frac{|\Omega|}{|B_r|}M_\alpha(u)+M_\alpha(u).
\end{align*}
Thus
\[
G_m(\Omega,x_0)\leq \frac{|\Omega|}{|B_r|}+1,
\]
which proves our claim.
\end{proof}

\begin{proof}[Proof of Theorem \ref{thm:stab-poly}]
Suppose without loss of generality that~$x_0=0$ and that~$r:=\dist(0,\partial\Omega)$.
Pick~$z\in\partial B_r\cap\partial\Omega$. 
Let us consider the function defined in \eqref{u-poly-case}
(together with \eqref{alphas}), 
which we recall being~$m$-polyharmonic in~$\R^n\setminus\{z\}$ and, therefore, in~$\Omega$.
Then
\begin{align*}
G_m(\Omega,0) & \geq
\frac1{M_\alpha(u)}
\Bigg|u(0)-(-1)^{m+1}c_m\fint_\Omega u-\sum_{k=1}^{m-1} (-1)^{k+1} c_k\fint_{\Omega_{\alpha_k}}u\Bigg| \\
& =
\frac1{|\Omega|}
\Bigg|
(-1)^{m+1}c_m\int_\Omega u
+\sum_{k=1}^{m-1} (-1)^{k+1} \frac{c_k}{\alpha_k^n}\int_{\Omega_{\alpha_k}}u\Bigg|.
\end{align*}
Note that in virtue of Lemma \ref{lem:mv} we have
\[
(-1)^{m+1}c_m\int_{B_r} u
+\sum_{k=1}^{m-1} (-1)^{k+1} \frac{c_k}{\alpha_k^n}\int_{B_{\alpha_kr}}u
=|B_r|u(0)=0
\]
and therefore
\begin{align*}
(-1)^{m+1}c_m\int_\Omega u
+\sum_{k=1}^{m-1} (-1)^{k+1} \frac{c_k}{\alpha_k^n}\int_{\Omega_{\alpha_k}}u
=
(-1)^{m+1}c_m\int_{\Omega\setminus B_r} u
+\sum_{k=1}^{m-1} (-1)^{k+1} \frac{c_k}{\alpha_k^n}\int_{\Omega_{\alpha_k}\setminus B_{\alpha_kr}}u.
\end{align*}
So,
\begin{align}\label{pause-poly}
G_m(\Omega,0) \geq
\frac1{|\Omega|M_\alpha(u)}\Bigg|
(-1)^{m+1}c_m\int_{\Omega\setminus B_r} u
+\sum_{k=1}^{m-1} (-1)^{k+1} \frac{c_k}{\alpha_k^n}\int_{\Omega_{\alpha_k}\setminus B_{\alpha_kr}}u\Bigg|.
\end{align}
We have already noticed, see \eqref{constant-sign}, 
that~$(-1)^{k+1}u\chi_{\Omega_{\alpha_k}\setminus B_{\alpha_kr}}\geq 0$ 
for any~$k=1,\ldots,m$.
This allows us to drop the absolute value in \eqref{pause-poly} and write
\begin{align*}
G_m(\Omega,0) &\geq 
\frac1{|\Omega|M_\alpha(u)}
\Bigg[
(-1)^{m+1}c_m\int_{\Omega\setminus B_r} u
+\sum_{k=1}^{m-1} (-1)^{k+1} \frac{c_k}{\alpha_k^n}\int_{\Omega_{\alpha_k}\setminus B_{\alpha_kr}}u\Bigg] \\
& =
\frac1{|\Omega|M_\alpha(u)}
\Bigg[(-1)^{m+1}c_m\int_{\Omega\setminus B_r} u +
\sum_{k=1}^{m-1} (-1)^{k+1} c_k\int_{\Omega\setminus B_r}u(\alpha_kx)\;dx \Bigg] \\
& \geq 
\frac{c_1}{|\Omega|M_\alpha(u)}
\int_{\Omega\setminus B_r}u(\alpha_1x)\;dx.
\end{align*}
In this setting, we have
\begin{align*}
\int_{\Omega\setminus B_r}u(\alpha_1x)\;dx\geq \frac{\alpha_1^2r^2\big(r^2-\alpha_1^2d_\Omega^2\big)}{(\alpha_1d_\Omega+r)^n}
|\Omega\setminus B_r|\prod_{k=2}^{m-1}\Big(\alpha_k^2r^2-\alpha_1^2d_\Omega^2\Big)
\end{align*}
and, using the choice done in \eqref{alphas}, 
\begin{align*}
\int_{\Omega\setminus B_r}u(\alpha_1x)\;dx
& \geq 
\frac{r^{2m}}{2^{2m-2}d_\Omega^{2m-2}}
\bigg(\frac{r^{m-1}}{2^{m-1}d_\Omega^{m-2}}+r\bigg)^{-n}|\Omega\setminus B_r|
\prod_{k=2}^m\bigg(\frac{r^{2m-2k+2}}{2^{2m-2k}d_\Omega^{2m-2k}}
-\frac{r^{2m-2}}{2^{2m-2}d_\Omega^{2m-4}}\bigg) \\
& =
\frac{r^{2m-n}}{2^{2m-2}d_\Omega^{2m-2}}
\bigg(1+\frac{r^{m-2}}{2^{m-1}d_\Omega^{m-2}}\bigg)^{-n}|\Omega\setminus B_r|
\prod_{k=2}^m\frac{r^{2m-2k+2}}{2^{2m-2k}d_\Omega^{2m-2k}}\bigg(1
-\frac{r^{2k-4}}{2^{2k-2}d_\Omega^{2k-4}}\bigg) \\
& \geq
C_{m,n}\frac{r^{2m-n}}{d_\Omega^{2m-2}}|\Omega\setminus B_r|
\prod_{k=2}^m\frac{r^{2m-2k+2}}{d_\Omega^{2m-2k}} \\
& =
C_{m,n}\frac{r^{2m-n}}{d_\Omega^{2m-2}}
\frac{r^{2m^2-2-2\sum_{k=2}^mk}}{d_\Omega^{2m^2-2m-2\sum_{k=2}^mk}}
|\Omega\setminus B_r| \\
& =
C_{m,n}
\frac{r^{m^2+m-n}}{d_\Omega^{m^2-m}}
|\Omega\setminus B_r|.
\end{align*}
We then deduce
\begin{align*}
C_{m,n}
c_1|\Omega\setminus B_r|
\leq
\frac{d_\Omega^{m^2-m}}{r^{m^2+m-n}}
M_\alpha(u)|\Omega| G_m(\Omega,0).
\end{align*}

Finally, we estimate (recall \eqref{Malpha})
\begin{align*}
|\Omega|\,M_\alpha(u)
=
\int_\Omega\Bigg|(-1)^{m+1} c_m u(y)+\sum_{k=1}^{m-1} (-1)^{k+1} c_k \,
u\big(\alpha_ky)\big)\Bigg|\;dy 
\leq 
c_m\int_\Omega|u| +
\sum_{k=1}^{m-1} c_k \int_\Omega\big|u(\alpha_ky)\big|\;dy 
\end{align*}
where, for any~$k=1,\ldots,m-1$,
\begin{align*}
\int_\Omega\big|u(\alpha_ky)\big|\;dy 
&=
\int_\Omega\frac{\alpha_k^2|y|^2}{{|\alpha_k y-z|}^n}\:
\prod_{j=2}^m\!\Big|\alpha_j^2r^2-\alpha_k^2|y|^2\Big|\;dy \\
&=
\int_\Omega\frac{\alpha_k^2|y|^2}{{|\alpha_k y-z|}^n}\:
\bigg[\prod_{j=k+1}^m\!\Big(\alpha_j^2r^2-\alpha_k^2|y|^2\Big]\bigg)
\bigg[\prod_{j=2}^k\!\Big|\alpha_j^2r^2-\alpha_k^2|y|^2\Big|\bigg]\;dy \\
& \leq 
\int_\Omega\frac{\alpha_k^2|y|^2}{{|\alpha_k y-z|}^n}\:
\bigg[\prod_{j=k+1}^m\!\Big(\alpha_j^2r^2\Big)\bigg]
\bigg[\prod_{j=2}^k\!\Big(\alpha_j^2r^2+\alpha_k^2d_\Omega^2\Big)\bigg]\;dy \\
& \leq 
|\Omega|\frac{\alpha_k^2d_\Omega^2}{\big(r-\alpha_kd_\Omega\big)^n}
\bigg[\prod_{j=k+1}^m\!\Big(\alpha_j^2r^2\Big)\bigg]
\bigg[\prod_{j=2}^k\!\Big(\alpha_j^2r^2+\alpha_k^2d_\Omega^2\Big)\bigg] \\
& \leq 
\frac{C_n|\Omega|}{r^n} \bigg(\frac{r}{d_\Omega}\bigg)^{2m-2k}d_\Omega^2r^{2m-2k}\bigg(\frac{r}{d_\Omega}\bigg)^{2\sum_{j=k+1}^m(m-j)}
d_\Omega^{2k-2} \\
& =
\frac{C_n|\Omega|}{r^n} \bigg(\frac{r}{d_\Omega}\bigg)^{2m-2k}d_\Omega^{2k}r^{2m-2k}\bigg(\frac{r}{d_\Omega}\bigg)^{m^2-m-2mk+k^2+k}
\\
& \leq 
\frac{C_n|\Omega|}{r^n} \bigg(\frac{r}{d_\Omega}\bigg)^{2m-2k}d_\Omega^{2k}r^{2m-2k}\bigg(\frac{r}{d_\Omega}\bigg)^{m-k-1} \\
& =
C_n|\Omega| d_\Omega^{5k-3m+1}r^{5m-5k-1-n}.
\end{align*}
In the last passage above, we have used that
\[
m^2-m-2mk+k^2+k\geq m-k-1
\qquad\text{for any }k=1,\ldots,m-1.
\]
Using the above estimate, we are now in position to state that
\begin{align*}
|\Omega|\,M_\alpha(u)
\leq c_m \int_\Omega|u| +
\sum_{k=1}^m c_k \int_\Omega\big|u(\alpha_ky)\big|\;dy 
\leq c_m \int_\Omega|u| +
C_n|\Omega| \sum_{k=1}^{m-1} c_kd_\Omega^{5k-3m+1}r^{5m-5k-1-n}.
\end{align*}
Given that
\begin{align*}
\int_\Omega\big|u(y)\big|\;dy 
&=
\int_\Omega\frac{|y|^2\,\big|r^2-|y|^2\big|}{{|y-z|}^n}\:
\prod_{j=2}^{m-1}\!\Big|\alpha_j^2r^2-|y|^2\Big|\;dy \\
& \leq 
d_\Omega^2
\int_\Omega\frac{\big|r^2-|y|^2\big|}{{|y-z|}^n}\;dy 
\prod_{j=2}^{m-1}\!\Big(\alpha_j^2r^2+d_\Omega^2\Big) \\
& \leq 
d_\Omega^2
\int_\Omega\frac{\big|r^2-|y|^2\big|}{{|y-z|}^n}\;dy 
\prod_{j=2}^{m-1}\!\bigg(4d_\Omega^2+d_\Omega^2\bigg) \\
& \leq 
5^{m-2}d_\Omega^{2m-2}
\int_\Omega\frac{\big|r^2-|y|^2\big|}{{|y-z|}^n}\;dy \\
& \leq 
5^{m-2}d_\Omega^{2m-2} C(n,|\Omega|)
\end{align*}
for some~$C>0$ depending only on the dimension~$n$ and the measure of~$\Omega$
(this is justified in~\cite{MR4205791}*{Lemma 2.1}),
we obtain
\begin{align*}
|\Omega|\,M_\alpha(u)
\leq 
C(n,m,|\Omega|)\Bigg(c_md_\Omega^{2m-2}+
\sum_{k=1}^{m-1} c_kd_\Omega^{5k-3m+1}r^{5m-5k-1-n}\Bigg).
\end{align*}
The coefficients~$c_k$'s, defined in \eqref{eq:coef} and
under the particular choice in \eqref{alphas}, have been computed in
Lemma~\ref{lem:coef}: accordingly, we estimate
\[
c_k\leq 2^{m-1}\bigg(\frac{r}{d_\Omega}\bigg)^{k^2-k}
\qquad\text{for any }k=1,\ldots,m.
\]
This yields
\begin{align*}
|\Omega\setminus B_r|
&\leq 
C(n,m,|\Omega|)\,G_m(\Omega,0)\,\frac{d_\Omega^{m^2-m}}{r^{m^2+m-n}}
\Bigg(r^{m^2-m}d_\Omega^{-m^2+3m-2}+\sum_{k=1}^{m-1}d_\Omega^{-k^2+6k-3m+1}r^{5m+k^2-6k-1-n}\Bigg) \\
&=
C(n,m,|\Omega|)\,G_m(\Omega,0)\,
\Bigg(r^{n-2m}d_\Omega^{2m-2}+
\frac{d_\Omega^{m^2-m}}{r^{m^2-m}}\sum_{k=1}^{m-1}\bigg(\frac{r}{d_\Omega}\bigg)^{3m+k^2-6k-1}\Bigg).
\end{align*}
Using now that\footnote{
For~$m\geq 4$, the term~$k^2-6k$ is minimized at~$k=3$ where it equals~$-9$ and~$3m-9-1\geq 2$;
for~$m=3$, the claimed inequality becomes~$k^2-6k+8=(4-k)(2-k)\geq 0$ which is true for~$k=1,2$; 
for~$m=2$ and~$k=1$, it is trivially verified.
}
\[
r<d_\Omega
\qquad\text{and}\qquad
3m+k^2-6k-1\geq 0
\quad\text{for any }k=1,\ldots,m-1,
\]
we can simply estimate
\begin{align*}
|\Omega\setminus B_r| \leq 
C(n,m,|\Omega|)\,G_m(\Omega,0)\,
\Bigg(r^{n-2m}d_\Omega^{2m-2}+
\frac{d_\Omega^{m^2-m}}{r^{m^2-m}}\Bigg).
\end{align*}
As~$G_m(\Omega,0)$ is scaling invariant, by rescaling we get
\[
\frac{|\Omega\setminus B_r|}{|\Omega|} \leq 
C(n,m,1)\,G_m(\Omega,0)\,
\Bigg(\frac{r^{n-2m}d_\Omega^{2m-2}}{|\Omega|^{1-\frac2n}}+
\frac{d_\Omega^{m^2-m}}{r^{m^2-m}}\Bigg)
\]
and using that
\begin{align*}
|B_1|r^n=|B_r|\leq|\Omega|
\quad\Rightarrow\quad
r\leq\bigg(\frac{|\Omega|}{|B_1|}\bigg)^\frac1n
\end{align*}
we are left with
\begin{align*}
\frac{|\Omega\setminus B_r|}{|\Omega|} 
&\leq 
C(n,m,1)\,G_m(\Omega,0)\,
\Bigg(\frac{d_\Omega^{2m-2}}{r^{2m-2}}+
\frac{d_\Omega^{m^2-m}}{r^{m^2-m}}\Bigg) \\
&\leq 
C(n,m,1)\,G_m(\Omega,0)\,
\frac{d_\Omega^{m^2-m}}{r^{m^2-m}}.
\end{align*}
\end{proof}

\appendix
\section{The coefficient matrix}\label{app}

We have introduced in \eqref{eq:cmatrix} the matrix~$V$ of the coefficients needed in the 
mean value formula \eqref{eq:coeff}. We collect here the properties that we need in our analysis.
Most of these are surely well known to the expert reader,
but since they might fall outside the cultural background of many others,
we provide some quick justifications.

First, let us compute its determinant.

\begin{lemma}
Let~$V$ be the matrix in \eqref{eq:cmatrix}. Then
\[
\det(V)=\prod_{1\leq i<j\leq m}\Big(\alpha_j^2-\alpha_i^2\Big).
\]
\end{lemma}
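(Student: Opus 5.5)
The plan is to reduce the claim to the classical Vandermonde determinant in the variables $x_i:=\alpha_i^2$. Substituting $x_i=\alpha_i^2$ turns $V$ into
\[
V=\big[x_i^{j}\big]_{1\leq i\leq m,\ 0\leq j\leq m-1},
\]
so it suffices to show that $\det[x_i^{j}]=\prod_{1\leq i<j\leq m}(x_j-x_i)$ for arbitrary real numbers $x_1,\ldots,x_m$.

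I would argue by induction on $m$. The case $m=1$ is trivial, since the determinant is $1$ and the product is empty. For the inductive step, I would perform column operations that leave the determinant unchanged. Going from the last column to the second one, replace column $j$ by column $j$ minus $x_1$ times column $j-1$, for $j=m-1,\ldots,1$ (columns indexed from $0$).

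After these operations, the first row becomes $(1,0,\ldots,0)$. Row $i\geq 2$ becomes $\big(1,(x_i-x_1),(x_i-x_1)x_i,\ldots,(x_i-x_1)x_i^{m-2}\big)$. Expanding along the first row and factoring $(x_i-x_1)$ out of each row $i\geq 2$ gives
\[
\det V=\prod_{i=2}^m(x_i-x_1)\,\det\big[x_i^{j}\big]_{2\leq i\leq m,\ 0\leq j\leq m-2}.
\]
The remaining determinant is the Vandermonde determinant of size $m-1$ in $x_2,\ldots,x_m$. By the inductive hypothesis it equals $\prod_{2\leq i<j\leq m}(x_j-x_i)$. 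Combining the two factors gives the full product, and substituting back $x_i=\alpha_i^2$ proves the lemma.

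An alternative route is to view $\det V$ as a polynomial in $x_m$ of degree $m-1$. It vanishes at $x_m=x_1,\ldots,x_{m-1}$, since two rows would then coincide, and its leading coefficient is the $(m-1)$-Vandermonde determinant; this gives the same induction. There is no real obstacle here. The only care needed is the order of the column operations, which must run from the last column backwards so that each step uses an unmodified previous column. One should also keep track of the sign convention: the factors come out as $x_j-x_i$ with $i<j$, matching the statement. As a byproduct, since $0<\alpha_1<\cdots<\alpha_m$, every factor $\alpha_j^2-\alpha_i^2$ is positive, so $\det V>0$, consistent with the positivity of the $c_k$ used earlier.
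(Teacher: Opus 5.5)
Your proof is correct, but your main argument takes a different route from the paper's. The paper's proof is the one you sketch as the ``alternative''. It expands $\det V$ along the last row and regards it as a polynomial of degree at most $m-1$ in $\alpha_m^2$, with coefficients independent of $\alpha_m$. It notes that this polynomial vanishes at $\alpha_1^2,\ldots,\alpha_{m-1}^2$ (two equal rows) and reads off the leading coefficient as the $(m-1)$-Vandermonde minor $\det(V_m)$. This gives $\det V=\det(V_m)\prod_{i=1}^{m-1}(\alpha_m^2-\alpha_i^2)$, and induction concludes.

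Your column elimination instead extracts the factor $\prod_{i\geq 2}(x_i-x_1)$ directly through determinant-preserving operations. It is therefore a purely algebraic identity valid for arbitrary $x_1,\ldots,x_m$, and it needs neither the factor theorem nor distinctness of the nodes. The paper's route is shorter and more conceptual. However, its passage from ``$m-1$ roots'' to the factorization tacitly uses that the $\alpha_i^2$ are distinct, which holds here because $0<\alpha_1<\cdots<\alpha_m$, or else an argument in the polynomial ring. Your version avoids this issue. You also correctly order the column operations (last column first) so that each step uses an unmodified previous column.
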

\begin{proof}
Let us proceed by induction. Let~$V_k$ be the~$(m-1)\times(m-1)$ matrix obtained by
removing from~$V$ the~$m$-th row and~$k$-th column, for~$k=1,\ldots,m$. 
By using the Laplace's expansion for the determinant, we write
\[
\det(V)=\sum_{k=1}^m (-1)^{k+m}\alpha_m^{2(k-1)}\det(V_k).
\]
In this setting,~$\det V$ is a polynomial of degree (at most)~$m-1$ in the variable~$\alpha_m^2$.
This polynomial has roots in~$\alpha_1^2,\ldots,\alpha_{m-1}^2$ and so
\[
\det(V)=\det(V_m)\prod_{i=1}^{m-1}\Big(\alpha_m^2-\alpha_i^2\Big).
\]
As the submatrix~$V_m$ is of Vandermonde type, we use the induction hypothesis to say
\[
\det(V_m)=\prod_{1\leq i<j\leq m-1}\Big(\alpha_j^2-\alpha_i^2\Big)
\]
and deduce the thesis.
\end{proof}

When~$V$ has entries like the ones defined in \eqref{alphas}, we also need the minors
listed in \eqref{minors}. This calculation is performed in the next Lemma.

\begin{lemma}\label{lem:coef}
For~$x\in(0,1)$, let 
\[
\overline V:=\Big[x^{2j(m-i)}\Big]_{1\leq i\leq m,\,0\leq j\leq m-1}.
\]
If~$\overline v_{k,1}$ is the minor obtained by removing from~$\overline V$ the~$k$-th row 
and the first column, then
\[
\overline v_{k,1}=\frac{x^{k^2-k}}
{\prod_{i=1}^{k-1}\Big(1-x^{2(k-i)}\Big)\prod_{j=k+1}^m\Big(1-x^{2(j-k)}\Big)}\,\det(\overline V).
\]
In particular,
\[
0<\frac{\overline v_{k,1}}{\det(\overline V)}<\frac{x^{k^2-k}}{\big(1-x^2\big)^{m-1}}.
\]
\end{lemma}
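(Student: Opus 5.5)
The plan is to reduce everything to the classical Vandermonde determinant, using the previous Lemma, and then compute the quotient explicitly. Set $a_i:=x^{2(m-i)}$ for $i=1,\ldots,m$, so that $\overline V=[a_i^j]_{1\le i\le m,\,0\le j\le m-1}$ is exactly the matrix in \eqref{eq:cmatrix} with $\alpha_i^2=a_i$. Since $x\in(0,1)$, we have $0<a_1<a_2<\cdots<a_m=1$. The previous Lemma then gives
\[
\det(\overline V)=\prod_{1\le i<j\le m}(a_j-a_i)>0 .
\]

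Next I will compute the minor $\overline v_{k,1}$. Removing the first column (the column of ones) and the $k$-th row leaves the rows $(a_i,a_i^2,\ldots,a_i^{m-1})$ for $i\neq k$. Factoring $a_i$ out of each of these rows leaves an $(m-1)\times(m-1)$ Vandermonde matrix in the nodes $\{a_i\}_{i\ne k}$, still in increasing order. Applying the previous Lemma once more gives
\[
\overline v_{k,1}=\prod_{i\ne k}a_i\prod_{\substack{i<j\\ i,j\neq k}}(a_j-a_i).
\]
Dividing by $\det(\overline V)$, all factors not involving the index $k$ cancel, and we are left with
\[
\frac{\overline v_{k,1}}{\det(\overline V)}=\frac{\prod_{i\ne k}a_i}{\prod_{i<k}(a_k-a_i)\prod_{j>k}(a_j-a_k)}.
\]

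The remaining step, which is the main obstacle, is the exponent bookkeeping. For $i<k$ we write $a_k-a_i=x^{2(m-k)}\big(1-x^{2(k-i)}\big)$, and for $j>k$ we write $a_j-a_k=x^{2(m-j)}\big(1-x^{2(j-k)}\big)$. Collecting powers of $x$, the net exponent is
\[
2\sum_{i\ne k}(m-i)-2(k-1)(m-k)-2\sum_{j>k}(m-j)=2\sum_{i<k}\big((m-i)-(m-k)\big)=2\sum_{i<k}(k-i)=k^2-k .
\]
This yields the claimed closed formula. Multiplying back by $\det(\overline V)$ gives the statement, and positivity of the quotient follows since every factor is positive.

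For the upper bound, each of the $m-1$ factors $1-x^{2\ell}$ with $\ell\ge1$ satisfies $1-x^{2\ell}\ge 1-x^2$. Hence
\[
\frac{\overline v_{k,1}}{\det(\overline V)}\le\frac{x^{k^2-k}}{(1-x^2)^{m-1}} .
\]
The inequality is strict as soon as some exponent $\ell\ge2$ appears, which happens whenever $m\ge3$. In the degenerate case $m=2$ the formula gives equality: for both $k=1$ and $k=2$ the only factor is $1-x^2$. Therefore the stated bound should be read as ``$\le$'' there. This is harmless for its use in Section~\ref{sec:stab}, where only the upper bound $c_k\lesssim (r/d_\Omega)^{k^2-k}$ with $x=r/(2d_\Omega)\le\tfrac12$ is needed.
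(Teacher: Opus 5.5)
Your derivation of the closed formula is correct and follows the paper's argument. You pull $x^{2(m-i)}$ out of each surviving row and apply the Vandermonde lemma to both the reduced matrix and $\overline V$. You then cancel every factor not involving the index $k$ and count powers of $x$; your net exponent $2\sum_{i<k}(k-i)=k^2-k$ is right.

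The one thing to fix is your closing remark on strictness, which contains a small false claim. You are right that the strict upper bound in the statement can fail; the paper's proof never checks it. However, it is not true that some exponent $\ell\geq 2$ appears whenever $m\geq 3$. The denominator consists of the factors $1-x^{2\ell}$ for $\ell=1,\ldots,k-1$ and for $\ell=1,\ldots,m-k$. So an exponent $\ell\geq 2$ occurs if and only if $k\geq 3$ or $k\leq m-2$. For $m=3$, $k=2$ both factors equal $1-x^2$, and the ratio is exactly $x^2/(1-x^2)^2$, so the bound is attained.

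Hence equality holds precisely for $(m,k)\in\{(2,1),(2,2),(3,2)\}$, and the inequality is strict in all other cases. As you say, only the non-strict bound is needed for the estimate on $c_k$ in the proof of Theorem~\ref{thm:stab-poly}, so this does not affect anything downstream.
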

\begin{proof}
We have
\begin{align*}
\overline v_{k,1}
&=
\det\Big[x^{2j(m-i)}\Big]_{1\leq i\leq m,\,i\neq k,\,1\leq j\leq m-1} \\
&=
\det\Big[x^{2(m-i)}x^{2(j-1)(m-i)}\Big]_{1\leq i\leq m,\,i\neq k,\,1\leq j\leq m-1} \\
&=
\left[\prod_{\substack{i=1 \\ i\neq k}}^mx^{2(m-i)}\right]\det\Big[x^{2(j-1)(m-i)}\Big]_{1\leq i\leq m,\,i\neq k,\,1\leq j\leq m-1} \\
&=
x^{m^2-3m+2k}\det\Big[x^{2(j-1)(m-i)}\Big]_{1\leq i\leq m,\,i\neq k,\,1\leq j\leq m-1}.
\end{align*}
The determinant we are left to compute is the one of a Vandermonde matrix, 
so by the previous Lemma we know that
\begin{align*}
&\det\Big[x^{2(j-1)(m-i)}\Big]_{1\leq i\leq m,\,i\neq k,\,1\leq j\leq m-1}
=\\
&\quad=\prod_{\substack{1\leq i<j\leq m \\ i,j\neq k}}\Big(x^{2(m-j)}-x^{2(m-i)}\Big) \\
&\quad=\det(\overline V)
\left[\prod_{i=1}^{k-1}\Big(x^{2(m-k)}-x^{2(m-i)}\Big)
\prod_{j=k+1}^m\Big(x^{2(m-j)}-x^{2(m-k)}\Big)
\right]^{-1} \\
&\quad=\det(\overline V)
\left[x^{2(m-k)(k-1)+(m-k)(m+k-3)}\prod_{i=1}^{k-1}\Big(1-x^{2(k-i)}\Big)
\prod_{j=k+1}^m\Big(1-x^{2(j-k)}\Big)
\right]^{-1}.
\end{align*}
We then deduce 
\[
\frac{\overline v_{k,1}}{\det(\overline V)}=\frac{x^{k^2-k}}
{\prod_{i=1}^{k-1}\Big(1-x^{2(k-i)}\Big)\prod_{j=k+1}^m\Big(1-x^{2(j-k)}\Big)}.
\]
\end{proof}

\section*{Acknowledgments}
The research of the author has been partially supported by the GNAMPA-INdAM project ``Equazioni nonlocali di tipo misto e geometrico'' (Italy), CUP E53C22001930001, and partially by the PRIN project 2022R537CS ``$NO^3$ - NOdal Optimization, NOnlinear elliptic equations, NOnlocal geometric problems, with a focus on regularity'' (Italy).
The author is also grateful to G. Cupini and E. Lanconelli for bringing this problem to his attention.


\begin{bibdiv}
\begin{biblist}

\bib{MR0192074}{article}{
   author={Bramble, J. H.},
   author={Payne, L. E.},
   title={Mean value theorems for polyharmonic functions},
   journal={Amer. Math. Monthly},
   volume={73},
   date={1966},
   number={4},
   pages={124--127},
}

\bib{MR3176589}{article}{
   author={Caramuta, P.},
   author={Cialdea, A.},
   title={Mean value theorems for polyharmonic functions: a conjecture by
   Picone},
   journal={Analysis (Berlin)},
   volume={34},
   date={2014},
   number={1},
   pages={51--66},
}

\bib{MR4205791}{article}{
   author={Cupini, Giovanni},
   author={Lanconelli, Ermanno},
   author={Fusco, Nicola},
   author={Zhong, Xiao},
   title={A sharp stability result for the Gauss mean value formula},
   journal={J. Anal. Math.},
   volume={142},
   date={2020},
   number={2},
   pages={587--603},
}

\bib{MR0140700}{article}{
   author={Epstein, Bernard},
   title={On the mean-value property of harmonic functions},
   journal={Proc. Amer. Math. Soc.},
   volume={13},
   date={1962},
   pages={830},
}

\bib{MR0177124}{article}{
   author={Epstein, Bernard},
   author={Schiffer, M. M.},
   title={On the mean-value property of harmonic functions},
   journal={J. Analyse Math.},
   volume={14},
   date={1965},
   pages={109--111},
}

\bib{MR2791531}{book}{
   author={Fallat, Shaun M.},
   author={Johnson, Charles R.},
   title={Totally nonnegative matrices},
   series={Princeton Series in Applied Mathematics},
   publisher={Princeton University Press, Princeton, NJ},
   date={2011},
   pages={xvi+248},
}

\bib{MR0547093}{article}{
   author={Fichera, Gaetano},
   title={Mean value theorems and majorization formulas for biharmonic
   functions},
   language={Italian},
   journal={Rend. Sem. Mat. Univ. Padova},
   volume={59},
   date={1978},
   pages={285--294 (1979)},
}

\bib{MR1298400}{article}{
   author={Fichera, Gaetano},
   title={Mean value theorems and characterization of the sphere in~${\bf
   R}^n$},
   journal={Rend. Sem. Mat. Messina Ser. II},
   volume={1(14)},
   date={1991},
   pages={91--103},
}

\bib{MR2667016}{book}{
   author={Gazzola, Filippo},
   author={Grunau, Hans-Christoph},
   author={Sweers, Guido},
   title={Polyharmonic boundary value problems},
   series={Lecture Notes in Mathematics},
   volume={1991},
   publisher={Springer-Verlag, Berlin},
   date={2010},
   pages={xviii+423},
}

\bib{MR0320348}{article}{
   author={Kuran, \"U.},
   title={On the mean-value property of harmonic functions},
   journal={Bull. London Math. Soc.},
   volume={4},
   date={1972},
   pages={311--312},
}

\bib{03024989}{book}{
 author={Nicolesco, Miron},
 book={
 title={Les fonctions polyharmoniques. Expos\'es sur la th\'eorie des fonctions. IV. Publi\'e par Paul Montel.},
 },
 language={French},
 title={Les fonctions polyharmoniques. Expos{\'e}s sur la th{\'e}orie des fonctions. IV. Publi{\'e} par Paul Montel.},
 date={1936},
}

\bib{03027684}{article}{
 author={Picone, Mauro},
 language={Italian},
 title={Sulla convergenza delle successioni di funzioni iperarmoniche},
 journal={Bulletin Math{\'e}matique de la Soci{\'e}t{\'e} Roumaine des Sciences},
 volume={38},
 number={2},
 pages={105--112},
 date={1936},
}

\end{biblist}
\end{bibdiv}
\end{document}